\newtheorem{precor}{{\bf Corollary}}
\newenvironment{cor}{\begin{precor}{\hspace{-0.5
               em}{\bf.\ }}}{\end{precor}}
\newtheorem{precon}{{\bf Conjecture}}
\newenvironment{con}{\begin{precon}{\hspace{-0.5
               em}{\bf.\ }}}{\end{precon}}
\newtheorem{predefin}{{\bf Definition}}
\newtheorem{preexm}{{\bf Example}}
\newtheorem{preappl}{{\bf Application}}
\newtheorem{prelem}{{\bf Lemma}}
\newtheorem{preproof}{{\bf Proof.\ }}
\newenvironment{proof}[1]{\begin{preproof}{\rm
               #1}\hfill{$\blacksquare$}}{\end{preproof}}
\newtheorem{preclm}{{\bf Claim}}
\newenvironment{clm}{\begin{preclm}{\hspace{-0.5
               em}{\bf.\ }}}{\end{preclm}}
\newtheorem{prethm}{{\bf Theorem}}
\newenvironment{thm}{\begin{prethm}{\hspace{-0.5
               em}{\bf.\ }}}{\end{prethm}}
\newtheorem{prealphthm}{{\bf Theorem}}
\newenvironment{alphthm}{\begin{prealphthm}{\hspace{-0.5
               em}{\bf.\ }}}{\end{prealphthm}}
\newtheorem{prealphlem}{{\bf Lemma}}
\newtheorem{prepro}{{\bf Proposition}}
\newenvironment{pro}{\begin{prepro}{\hspace{-0.5
               em}{\bf.\ }}}{\end{prepro}}
\newtheorem{prequ}{{\bf Question}}
\newtheorem{prealphqu}{{\bf Question}}
\newtheorem{preprb}{{\bf Problem}}
\def\conct[#1,#2]{\mbox {${#1} \leftrightarrow {#2}$}}
\def\dconct[#1,#2]{\mbox {${#1} \rightarrow {#2}$}}
\def\deg[#1,#2]{\mbox {$d_{_{#1}}(#2)$}}
\def\mindeg[#1]{\mbox {$\delta_{_{#1}}$}}
\def\maxdeg[#1]{\mbox {$\Delta_{_{#1}}$}}
\def\outdeg[#1,#2]{\mbox {$d_{_{#1}}^{^+}(#2)$}}
\def\minoutdeg[#1]{\mbox {$\delta_{_{#1}}^{^+}$}}
\def\maxoutdeg[#1]{\mbox {$\Delta_{_{#1}}^{^+}$}}
\def\indeg[#1,#2]{\mbox {$d_{_{#1}}^{^-}(#2)$}}
\def\minindeg[#1]{\mbox {$\delta_{_{#1}}^{^-}$}}
\def\maxindeg[#1]{\mbox {$\Delta_{_{#1}}^{^-}$}}
\def\dre[#1,#2,#3]{\mbox {${\cal E}_{_{#3}}(#1,#2)$}}
\def\var[#1,#2]{\mbox {${\rm Var}_{_{#1}}(#2)$}}
\def\ls[#1]{\mbox {$\xi^{^{#1}}$}}
\def\hom[#1,#2]{\mbox {${\rm Hom}({#1},{#2})$}}
\def\onvhom[#1,#2]{\mbox {${\rm Hom^{v}}(#1,#2)$}}
\def\onehom[#1,#2]{\mbox {${\rm Hom^{e}}(#1,#2)$}}
\def\core[#1]{\mbox {$#1^{^{\bullet}}$}}
\def\cay[#1,#2]{\mbox {${\rm Cay}({#1},{#2})$}}
\def\cays[#1,#2]{\mbox {${\rm Cay_{s}}({#1},{#2})$}}
\def\dirc[#1]{\mbox {$\stackrel{\rightarrow}{C}_{_{#1}}$}}
\def\cycl[#1]{\mbox {${\bf Z}_{_{#1}}$}}
\date{}
\begin{document}
\begin{center}
{\Large \bf $r$-Dynamic Chromatic Number  of  Graphs }\\
\vspace*{0.5cm}
{\bf  Ali Taherkhani}\\
{\it Department of Mathematics}\\
{\it Institute for Advanced Studies in Basic Sciences }\\
{\it P.O. Box {\rm 45195-1159}, Zanjan {\rm 45195}, Iran}\\
{\tt ali.taherkhani@iasbs.ac.ir}\\
\end{center}
\begin{abstract}
\noindent 
An $r$-dynamic $k$-coloring of a graph $G$ is a proper vertex $k$-coloring such that the neighbors of any vertex $v$
receive at least $\min\{r,{\rm deg}(v)\}$ different colors. The $r$-dynamic chromatic number of  $G$, $\chi_r(G)$, 
is defined as the smallest $k$ such that $G$ admits an $r$-dynamic $k$-coloring. In this paper we introduce  an upper
bound for $\chi_r(G)$ in terms of $r$, chromatic number, maximum degree and minimum degree.   
In 2001, Montgomery  \cite{MR2702379} conjectured that, for a $d$-regular graph $G$, $\chi_2(G)-\chi(G)\leq 2$.
In this regard, for a $d$-regular graph $G$, we present two  upper bounds for $\chi_2(G)-\chi(G)$,
  one of them,  $\lceil 5.437\log d+2.721\rceil$, is an improvement  of the bound $14.06\log d +1$, proved by Alishahi (2011) \cite{MR2746973}.
  Also, we give an upper bound for $\chi_2(G)$ in terms of chromatic number, maximum degree and minimum degree.   
\\

\noindent {\bf Keywords:}\ {Chromatic number, dynamic chromatic number, r-dynamic chromatic number.}\\
{\bf Subject classification: 05C}
\end{abstract}
\section{Introduction}

Let $r$ be a positive integer. An  $r$-dynamic $k$-coloring of a graph $G$ is a proper vertex $k$-coloring such that 
 every vertex $v$ receives at least $\min\{r,\rm{ deg}(v)\}$ colors  in its neighbors. 
The minimum $k$ for which a graph $G$ has a $r$-dynamic $k$-coloring is called  $r$-dynamic chromatic number of  $G$, 
 and denoted $\chi_r(G)$. It is obvious that $\chi(G)\leq\chi_r(G)\leq \chi_{r+1}(G)$. 
 The $r$-dynamic chromatic number first introduced by  Montgomery \cite{MR2702379}.
In case  $r=2$,   it is called dynamic chromatic number. The dynamic chromatic number, $\chi_2(G)$, have been  investigated  in several papers,  
see, e.g., \cite{MR2935408,MR2746973,MR2954752,MR2954756,MR3057028,MR1991048}.
 In 2001  Montgomery conjectured that for a regular graph $G$, $\chi_2(G)-\chi(G)\leq 2$.
\begin{con}{\rm \cite{MR2702379}}\label{mg}
For every regular graph $G$, we have $\chi_2(G)-\chi(G)\leq 2.$
\end{con}
 Some upper bounds for the dynamic chromatic number of graphs and regular graphs have been studied in recent years.
\begin{alphthm}{\rm \cite{MR2702379}}\label{mont}
Let $G$ be a graph with maximum degree $\Delta(G)$. Then $\chi_2(G)\leq \Delta(G)+3.$ 
\end{alphthm}
In this regard, for a graph $G$ with $\Delta(G)\geq3$, it was proved that $\chi_2(G)\leq \Delta(G)+1$ \cite{MR1991048}. Also, for a regular graph $G$, it was shown by  Alishahi:
\begin{alphthm}{\rm {\cite{MR2954752}}}\label{mysm}
If $G$ is a $d$-regular graph,  then $\chi_2(G)\leq \chi(G)+14.06\log d+1.$ 
\end{alphthm} 

 Alishahi proved that for every graph $G$ with $\chi(G)\geq 4$, $\chi_2(G)\leq \chi(G)+\gamma(G)$, 
 where $\gamma(G)$ is the domination number of a graph $G$  \cite{MR2954752}.  
In the proof of current upper bound for $\chi_2(G)$, it was shown that if $\chi(G)\geq 4$,
 then for every $k\geq \chi(G)$, there is a proper $k$-coloring $c$ such that the 
set of vertices whose neighbor set receives only one color is an independent set.  
For a $k$-coloring $c$ of $G$, we say that a vertex $v$ of $G$ is bad if  ${\rm deg}(v)\geq 2$ and only one color appears in the neighbors of $v$.  
Let $B_c$ be the set of all bad vertices in the $k$-coloring $c$ of $G$. 

Another upper bound for the dynamic chromatic number of a $d$-regular graph $G$ in terms of $\chi(G)$ and the independence number of  $G$, $\alpha(G)$, was
introduced in \cite{MR2954756}. In fact, it was proved that $\chi_2(G)\leq \chi(G)+2\log_2\alpha(G)+3$. 

The set of neighbors of a vertex $v$ in a
graph $G=(V(G),E(G))$ is denoted by $N_G(v)$ (or $N(v)$) and  the degree of $v$ in $G$, $|N_G(v)|$, is denoted by ${\rm deg}(v)$.
For $B\subseteq V(G)$, denote by $N_G(B)$(or $N(B)$) the set of neighbors of $B$ in $G$. 
The second power of a  graph $G$ is the graph $G^2$ whose vertex set is
$V(G)$,  but in which two distinct vertices are adjacent  if and only if their distance in $G$ is
at most $2$.
In this paper $\log$ stands for the natural logarithm and $e$ denotes its base.

Here we recall two powerful probabilistic tools which we will use in proofs. For more details of the following theorems, we refer the reader to
Appendix A  and Chapter 5 of \cite{MR1885388}.
\begin{alphthm}
{\rm (Chernoff Inequality). }
Let $X_1,\ldots, X_n$ be independent random variables. They need not have the same distribution. Assume that $0\leq X_i\leq1$ always, for each $i$.
Let $X=X_1 + \cdots + X_n$ and 
$\mu=E(X)=E(X_1)+\cdots+E(X_n)$.
Then for any $\epsilon\geq 0$,
$$\Pr(X \geq(1+\epsilon)\mu)\leq{ ({e^\epsilon\over{(1+\epsilon)^{1+\epsilon}}})^\mu},$$
$$\Pr(X \leq(1-\epsilon)\mu)\leq \exp (-\frac{\epsilon^2}{2}\mu).$$
\end{alphthm}
\begin{alphthm}{\rm (Lov\'asz  Local Lemma).}
Suppose that $A_1,\ldots, A_n$
are events in a probability space with $\Pr(A_i)\leq p$ for all $i$. If each event is mutually
independent of all the other events except for at most $d$ of them, and if $ep(d+1)\leq 1$,
then $$\Pr(\cap_{i=1}^n A_i)>0.$$
\end{alphthm}

\section{Results}

In this section we present some upper bounds on the dynamic chromatic number and  the $r$-dynamic chromatic number  in terms of the chromatic number,
the maximum degree, and the minimum degree.  

First we prove a proposition, which gives an upper bound for the $r$-dynamic chromatic number by applying the following theorem. 
\begin{alphthm}{\rm\cite{MR1446766}}\label{mc}
Let $H$ be a hypergraph in which every hyperedge contains
at least $k$ points and meets at most $d$ other hyperedges. Let $r\geq 2$ be an integer. If
$e((d+1)(r-1)+1)(1-1/r)^k\leq 1$, then $H$ has an $r$-coloring in which each color
appears on each hyperedge.
\end{alphthm}
\begin{pro}\label{pro}
Let G be a graph with  maximum degree $\Delta$ and minimum  degree $\delta$. If  $e((\delta\Delta-\delta+1)(r-1)+1)(1-1/r)^{\delta}\leq1$, then $\chi_r(G)\leq r\chi(G)$.
\end{pro} 
\begin{proof}{
For every vertex $v$, fix a subset $N'(v)\subseteq N(v)$ with $\delta$ elements.
Consider a hypergraph $H$ whose vertex set is $V(G)$ and its hyperedge set is $E(H)=\{N'(v):v\in V(G)\}$.
Every hyperedge  meets at most $\delta(\Delta-1)$ other hyperedges. Therefore, as a consequence   of Theorem~\ref{mc}, 
 there is an $r$-coloring of $H$, say $g$, such that each color
appears on each hyperedge. Consider  a $\chi(G)$-coloring of $G$, say $f$. We can now obtain a $r$-dynamic coloring  
of $G$ by assigning to each
vertex $v$ the ordered pair $(f(v),g(v))$.}
\end{proof}

We should mention that Proposition~\ref{pro}, in case $r=2$, was proved in \cite{MR2746973} and as a corollary of this fact it was shown that  the dynamic chromatic number
of a $d$-regular $G$ with $d\geq 7$  is at most $2\chi(G)$. Also,  it was shown that for any $d$-regular graph $G$, $\chi_2(G)\leq 2\chi(G).$

 In \cite{MR2746973}, Alishahi  showed that each $d$-regular graph $G$ has dynamic chromatic number at most $\chi(G)+14.06\log d$+1. 
 Also, in \cite{MR2954752}, he proved that
 for any $d$-regular graph $G$ with no induced
$C_4$, $\chi_2(G)\leq \chi(G)+ 2\lceil4\log d +1\rceil$. 
We strengthen  these two results in the following theorem. We show that for a $d$-regular graph $G$, $\chi(G)+\lceil e\log(d^2+1)+e\rceil$, which is at most
$\chi(G)+\lceil 5.437\log d+2.721\rceil$, when $d\geq3$. 
\begin{thm}\label{Dynam1}
The dynamic chromatic number of  a $d$-regular graph $G$, $\chi_2(G)$, is at most
$$\chi(G)+\lceil e\log(d^2+1)+e\rceil.$$  
 \end{thm}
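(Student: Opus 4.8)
The plan is to mimic the structure of Proposition~\ref{pro} but to be more careful about which vertices actually need to see two colors, so that the hypergraph we feed to the Lovász-type Theorem~\ref{mc} needs only $2$ colors per edge rather than $r$ colors. Start from a proper $\chi(G)$-coloring $f$ of $G$ and let $k=\lceil e\log(d^2+1)+e\rceil$. I would build a $k$-coloring $g$ of the vertices of $G$, independently of $f$, whose only job is to guarantee that every vertex sees at least two colors among its neighbours \emph{in the combined coloring} $(f(v),g(v))$. If a vertex $v$ already sees two distinct $f$-colors among its neighbours, there is nothing to fix for $v$; the constraint only bites at vertices $v$ all of whose neighbours share a single $f$-color. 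For such a $v$, it suffices that the $g$-coloring is non-constant on $N(v)$. So I would set up a hypergraph $H$ on vertex set $V(G)$ whose hyperedges are the neighbourhoods $N(v)$ of the "dangerous" vertices $v$ (those monochromatic under $f$), and ask Theorem~\ref{mc} with $r=2$ for a $2$-coloring in which each color appears on each hyperedge — equivalently, a $g$-coloring that is non-constant on each dangerous $N(v)$. Here $G$ being $d$-regular makes every hyperedge have exactly $d=k'$ points and meet at most $d(d-1)=d^2-d$ other hyperedges; with $r=2$ the hypothesis of Theorem~\ref{mc} becomes $e\bigl((d^2-d+1)\cdot1+1\bigr)(1/2)^{d}\le 1$, i.e. $e(d^2-d+2)\le 2^{d}$, which holds for all $d\ge 3$ (and the small cases can be checked by hand or are covered by Theorem~\ref{mont}).

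The catch is that a mere $2$-coloring $g$ forces only $k=2$, which is far too weak: combining $f$ with a $2$-coloring gives at most $2\chi(G)$ colors, not $\chi(G)+O(\log d)$. The real idea must be to do this with a \emph{small palette overlapping $f$'s palette}. So instead I would take the $\chi(G)$ color classes of $f$ and, via Theorem~\ref{mc} applied to the neighbourhood hypergraph with an $r$-coloring for a suitable small $r$, recolor within a bounded number of extra colors. Concretely: apply Theorem~\ref{mc} to the $d$-uniform hypergraph $H=\{N(v):v\in V(G)\}$ (every edge meets $\le d^2-d$ others) with $r$ chosen as small as possible so that $e\bigl((d^2-d+1)(r-1)+1\bigr)(1-1/r)^{d}\le1$ — since $(1-1/r)^d\le e^{-d/r}$, it suffices that $e(d^2)(r)e^{-d/r}\le 1$ roughly, giving $r$ of order $d/\log d$... that is still too big. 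The correct reading, matching the claimed bound $e\log(d^2+1)+e$, is to choose the number of \emph{new} colors $t$ so that Theorem~\ref{mc} yields a $2$-coloring of $H$ but where "one side" of the bipartition is re-colored using $t$ fresh colors chosen to break all the monochromatic neighbourhoods; requiring $e(d^2-d+1+1)(1-1/t)^{?}$... Let me restate the intended route cleanly: produce from $f$ a proper $\chi(G)+t$ coloring $c$ such that $B_c=\varnothing$, by the argument of Alishahi recalled in the introduction (for $\chi(G)\ge 4$ one can first make $B_c$ independent), then re-color $B_c\cup N(B_c)$ using $t$ extra colors so that every bad vertex gains a second neighbour-color; the hypergraph whose edges are $\{N(v):v\in B_c\}$ is $d$-uniform, each edge meets $\le d^2-d$ others (through shared neighbours), and Theorem~\ref{mc} with $r=t$ gives a valid re-coloring provided $e\bigl((d^2-d+1)(t-1)+1\bigr)(1-1/t)^{d}\le1$. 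Taking $t=\lceil e\log(d^2+1)+e\rceil$ and using $(1-1/t)^d\le e^{-d/t}$ one checks $e(d^2-d+1)t\,e^{-d/t}\le 1$ reduces, after bounding $d^2-d+1\le d^2+1$ and $t\ge e\log(d^2+1)$, to an inequality of the form $(d^2+1)^{1-1/\,?}\le \cdots$ that holds; I would carry out this one-line estimate using $e^{-d/t}\le (d^2+1)^{-1/e}$-type bounds and $\log t = O(\log\log d)$.

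In order, the steps are: (1) handle $\chi(G)\le 3$ trivially via $\chi_2\le\Delta+3=d+3$ and direct comparison; (2) for $\chi(G)\ge 4$, invoke the recalled fact that there is a proper $\chi(G)$-coloring $f$ whose bad set $B=B_f$ is independent; (3) form the $d$-uniform hypergraph $H$ with edge set $\{N(v):v\in B\}$ and note each edge meets at most $d(d-1)$ others; (4) apply Theorem~\ref{mc} with $r=t:=\lceil e\log(d^2+1)+e\rceil$ to get a $t$-coloring $g$ of $\bigcup_{v\in B}N(v)$ in which every color — in particular at least two colors — appears on each $N(v)$, after verifying $e\bigl((d(d-1)+1)(t-1)+1\bigr)(1-1/t)^{d}\le 1$; (5) recolor the vertices of $N(B)$ by shifting them into $t$ brand-new colors according to $g$ — since $B$ is independent, $N(B)$ induces a graph each of whose vertices keeps a valid proper color among the new palette paired with its old one, or more simply assign vertex $u\in N(B)$ the pair $(f(u),g(u))$ and every other vertex $(f(w),1)$; (6) conclude the resulting coloring is proper, uses at most $\chi(G)+t$ colors (the $t$ pairs $(\cdot,2),\dots,(\cdot,t)$ collapse appropriately since only $N(B)$ uses them and $B$ independent keeps the count additive), and is $2$-dynamic because every originally-bad vertex now sees $\ge 2$ colors and every originally-good vertex is unaffected. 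The main obstacle is step (6): bookkeeping the exact color count so that the new colors add only $t$ rather than multiply, which is exactly where independence of $B$ (hence that $N(B)$ is disjoint from $B$ and the new colors can be confined) is essential, together with the clean arithmetic verification in step (4) that $t=\lceil e\log(d^2+1)+e\rceil$ suffices for all $d\ge 3$.
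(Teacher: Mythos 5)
Your route is genuinely different from the paper's (the paper does not use Theorem~\ref{mc} here), but as written it has two concrete gaps. First, the arithmetic in your step (4) fails for a large middle range of $d$. Requiring \emph{all} $t=\lceil e\log(d^2+1)+e\rceil$ colors to appear on every $d$-element edge costs $(1-1/t)^d\approx e^{-d/t}$ per missing color, and with $t=\Theta(\log d)$ the condition $e\bigl((d(d-1)+1)(t-1)+1\bigr)(1-1/t)^d\le 1$ needs roughly $d\gtrsim c(\log d)^2$; e.g.\ for $d=20$ one gets $t=20$, $(1-1/20)^{20}\approx e^{-1}$, and the left side is about $7\cdot 10^3$. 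The inequality only starts holding for $d$ in the several hundreds, and the fallback $\chi_2\le d+3$ covers only $d\lesssim 14$, so the range in between is simply not handled. Second, even where Theorem~\ref{mc} applies, it gives you a partition of $N(B)$ into $t$ classes with no control over adjacency inside $N(B)$: two adjacent vertices of $N(B)$ may land in the same $g$-class, so reassigning each class a fresh color is not proper, while keeping pairs $(f(u),g(u))$ multiplies the palette. You correctly flag this bookkeeping as ``the main obstacle,'' but it is not resolved, and independence of $B$ does not resolve it ($N(B)$ can induce edges). A third, smaller issue: recoloring $N(B)$ can turn a previously good vertex $u$ with $N(u)\subseteq N(B)$ into a bad one if all its neighbors receive the same new color; you need a condition preventing $N(u)\subseteq D$ for such $u$.

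The paper's proof sidesteps all three points by working probabilistically with the Lov\'asz Local Lemma directly: it picks each vertex of $N(B)$ into a set $D$ independently with probability $p=\log\bigl(e(d^2+1)\bigr)/d$ and defines three event types --- $N(v)\cap D=\varnothing$ or $N(v)\subseteq D$ for $v\in B$, $|N(v)\cap D|\ge edp$ for $v\in N(B)$ (a Chernoff bound with $\epsilon=e-1$), and $N(v)\subseteq D$ for the remaining vertices. Avoiding all events gives a $D$ that meets every bad neighborhood, misses part of every neighborhood, and induces a subgraph of maximum degree at most $edp=e(1+\log(d^2+1))$, which is then colored greedily with $\lceil e\log(d^2+1)+e\rceil$ brand-new colors. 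That degree bound is exactly the ingredient your hypergraph-coloring route lacks. (Also note the paper handles $\chi(G)\le 3$ with $d$ large via $\chi_2(G)\le 2\chi(G)\le 6$, not via $\chi_2\le d+3$, which is too weak there.)
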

\begin{proof}{ Let   $G$ be a $d$-regular graph.
If $d\leq 14$, then $d+3\leq e(1+\log (d^2+1)).$
By Theorem \ref{mont},  $\chi_2(G)\leq d+3$ and as a result $\chi_2(G)\leq\chi(G)+\lceil e(1+\log(d^2+1))\rceil.$

Assume that  $\chi(G)\leq 3$ and $d\geq 14$. Since for a $d$-regular graph $G$, $\chi_2(G)\leq 2\chi(G)$, we have 
$\chi_2(G)\leq6$.
if $d\geq 14$, then $6\leq e(1+\log (d^2+1))$ and so
it is obtained that $\chi_2(G)\leq\chi(G)+e(1+\log (d^2+1))$.

Now suppose that  $d\geq 14$ and $\chi(G)\geq 4$. In view of  discussion  after Theorem~\ref{mysm}, there is a coloring, say $c$, with $\chi(G)$ colors 
such that the set of bad vertices $B_c$, for simplicity $B$, is independent.  
Choose every vertex of $N(B)$(neighborhoods of $B$) with probability $p={\log e(d^2+1)/ d}$ and put a set, say $D$.
We define three type bad events as follows.
\begin{itemize} 
\item For every $v\in B$, $E_v$ is event that $N(v)\cap D=\varnothing$ or $N(v)\subseteq D$.
\item For every $v\in N(B)$, $E_v$ is event that $ |N(v)\cap D|\geq edp$.
\item For every $v\in V(G)\setminus B\cup N(B)$, $E_v$ is event that $N(v)\subseteq D$.
\end{itemize} 
It is easy to see that for every  $v\in B$, $\Pr (E_v)=(1-p)^d+p^d$.  For $d\geq 14$, we have $$(1-p)^d+p^d\leq \exp(-pd)=\frac{1}{e(d^2+1)}.$$
Let $w\in N(B)$ and  $X_w$ be a random variable which counts the number of neighbors of $w$ in $D$. Hence,
 $$\Pr(E_w)=\Pr(X_w\geq edp).$$
Suppose that there is a vertex $b\in B$ with  $N(b)\subseteq B$.  Then, for every $w\in B$, $\Pr(X_w\geq l)\leq \Pr(X_b\geq l)$, where $l$ is a positive integer.
 Now by the Chernoff inequality, the probability 
$\Pr(X_w\geq (1+\epsilon)dp)\leq \Pr(X_b\geq (1+\epsilon)dp)\leq { ({e^\epsilon/{(1+\epsilon)^{1+\epsilon}}})^{dp}}$. Take $p={\log e(d^2+1)/ d}$ and $\epsilon=e-1$. Therefore,
$$\Pr (E_w)\leq {1\over e(d^2+1)}.$$
For every $u\in V(G)\setminus B\cup N(B)$, the probability of  event $E_u$ is at most $p^d$. 
Since $d\geq 14$, we have  $p^d<{1/ e(d^2+1)}$ and consequently $\Pr(E_u)< {1/ e(d^2+1)}$.

Every  event $E_v$ is mutually  independent of all the events  corresponding to vertices 
 which have distance greater than 2 from $v$. As there are  $d^2$ such events and $e(1/e(d^2+1))(d^2+1)\leq 1$, by the Lov\'asz  local lemma,
 it follows that $$\Pr({\displaystyle\bigcap_{v\in V(G)}}  E^c_v)>0.$$
 Therefore, there is a subset $D\subseteq N(B)$ which satisfies the following four conditions. First, every bad vertex $v$ has a neighbor in $D$ and a neighbor out of $D$.
  Second, the maximum degree of $G[D]$ is at most  $e(1+\log(d^2+1))$.
  Third, since $B\cap N(B)=\varnothing$, every $v\in N(B)$ has at least one neighbor out of $D$. 
  Fourth, every vertex $v\not\in B\cup N(B)$ has at least one neighbor out of $D$, too.
 
 Now, one can color $G[D]$ with $\lceil e(1+\log(d^2+1))\rceil$ new colors and extend this new coloring  and former coloring $c$ on vertices 
 $V(G)\setminus D$ to a proper dynamic coloring for $G$ with $\chi(G)+\lceil e(1+\log(d^2+1))\rceil$ colors, as desired.
}\end{proof}
Here we give another upper bound on $\chi_2(G)-\chi(G)$ for a $d$-regular graph $G$,
which is somewhat complementary to  the upper bound in \cite{MR2954752}. It was shown in \cite{MR2954752}  that the difference between the dynamic chromatic
number and the chromatic number of a $d$-regular graph $G$ with $\chi(G)\geq 4$ is at most $\alpha(G^2)$, where $\alpha(G^2)$ is the independence number
of graph $G^2$. 

Let $G$ be a graph and let $\{V_1, V_2,\ldots, V_n\}$ be a partition of $V(G)$  into $n$ pairwise disjoint sets. A transversal of $G$  is a subset  $T$ of vertices 
containing exactly one vertex from each $V_i$. We  apply the next theorem in the proof of Theorem~\ref{G2}.
\begin{alphthm}\label{forest}{\rm\cite{MR2246152}}
Let $H$ be a graph of maximum degree $d$ and $\{V_1, V_2,\ldots, V_n\}$ be a partition of its vertex set  into $n$ pairwise disjoint sets with
$|V_i|\geq  d$. Then
there is a transversal $T$ such that $H[T]$ is a forest.
\end{alphthm}
\begin{thm}\label{G2}
Let $G$ be a  $d$-regular graph and  $c$ be a $k$-coloring of  $G$. If $B_c$ is the set of all bad vertices for coloring $c$, then
$\chi_2(G)\leq k+2\chi(G^2[B_c]\setminus E(G))$. Therefore, $\chi_2(G)\leq \min\{k+2\chi(G^2[B_c]\setminus E(G)):\text{ c is a } k\text{-coloring of } G \}.$
\end{thm}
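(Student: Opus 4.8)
The plan is to repair the given $k$-colouring $c$ by recolouring a suitable set of vertices with $2t$ fresh colours, where $t:=\chi(G^2[B_c]\setminus E(G))$ (we may assume $d\ge 2$, since for $d\le 1$ one has $B_c=\varnothing$, $t=0$, and the bound is trivial). The displayed ``Therefore'' then follows immediately, since the inequality will hold for every $k$-colouring $c$.

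Fix a proper colouring of $G^2[B_c]\setminus E(G)$ with colour classes $B_1,\dots,B_t$ and reserve two new colours $a_i,b_i$ for class $B_i$. The first point is structural: two vertices of $B_c$ are adjacent in $G^2[B_c]\setminus E(G)$ precisely when they are at distance $2$ in $G$, so inside each $B_i$ any two vertices are at distance $1$ or at distance $\ge 3$; hence $G[B_i]$ contains no induced $P_3$ and is a disjoint union of cliques, and a clique of size $\ge 3$ inside $B_c$ would, two of its vertices being neighbours of a third bad vertex, violate properness. Thus $G[B_i]$ is a disjoint union of isolated vertices $I_i$ and isolated edges $M_i$, and the same distance-$2$ exclusion shows that the sets $\{N(v):v\in I_i\}$ are pairwise disjoint and are disjoint from, and span no edge to, the vertex set $V(M_i)$ of the edges of $G[B_i]$.

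For each $i$ I would build the recolouring set $R_i$ as follows: put both endpoints of every edge of $M_i$ into $R_i$, and apply Theorem~\ref{forest} to the subgraph of $G$ induced by $\bigcup_{v\in I_i}N(v)$, partitioned into the pairwise disjoint sets $N(v)$, each of size exactly $d$, noting $G$ has maximum degree $d$; this gives a transversal $T_i$, one neighbour $f(v)\in N(v)$ per $v\in I_i$, with $G[T_i]$ a forest, which I add to $R_i$. By the disjointness above $G[R_i]$ is the vertex-disjoint union of the matching $M_i$ and the forest $G[T_i]$, hence itself a forest, so it is properly $2$-colourable; colour it with $a_i,b_i$, recolour the vertices of $R_i$ accordingly, do this for every $i$, and break ties for a vertex lying in several $R_i$'s in favour of the smallest index. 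The resulting colouring $c'$ uses $k+2t$ colours, and it is proper: an edge joining a recoloured to an unrecoloured vertex is harmless because the new and old palettes are disjoint, an edge inside one $R_i$ because its $2$-colouring is proper, and an edge between $R_i$ and $R_j$ with $i\ne j$ because $\{a_i,b_i\}\cap\{a_j,b_j\}=\varnothing$.

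It remains to check the $2$-dynamic condition, and this is the delicate part. Every bad vertex $v\in B_i$ has a recoloured neighbour carrying a new colour different from $\alpha_v$, its partner on an edge of $M_i$ or $f(v)$, so the only obstruction would be a vertex $x$ all of whose neighbours get the same new colour $a_m$; then $N(x)\subseteq R_m$, it lies on one side of the proper $2$-colouring of the forest $G[R_m]$, and hence is independent in $G$. I would eliminate this by a case analysis on $R_m=V(M_m)\cup T_m$: $x$ cannot itself lie in $R_m$, for if $x\in V(M_m)$ a neighbour of $x$ in $T_m$ has an $f$-preimage at distance $2$ from $x$ inside $B_m$, which is forbidden, while if $x\in T_m$ then $f^{-1}(x)\in N(x)\subseteq R_m$, contradicting $I_m\cap R_m=\varnothing$; so $x\notin R_m$, and since $G[R_m]$ has no edge between $V(M_m)$ and $T_m$, the component of $G[R_m]$ containing $N(x)$ lies entirely in $V(M_m)$, impossible for $d\ge 2$ since it is a single edge whereas $N(x)$ is independent of size $d$, or entirely in one tree of $G[T_m]$. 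Ruling out this last possibility — $N(x)$ contained in one tree $C$ of $G[T_m]$ with pairwise even distances in $C$, which would survive any proper $2$-colouring of $C$ and would even make a non-bad $x$ newly bad — together with organising the tie-breaking so that none of the overlaps $R_m\cap R_{m'}$ reintroduces such a configuration, is exactly the step I expect to be the main obstacle; it should follow from the precise way the transversal $T_m$ meets the second neighbourhood of $x$, each such neighbour of $x$ coming from a distinct vertex of $I_m$ at distance $2$ from $x$ and these vertices being pairwise at distance $\ge 3$, but making this count work cleanly is the crux.
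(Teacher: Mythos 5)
Your construction is, up to the point you flag, the paper's construction: the same decomposition of $B_c$ into classes $B_1,\dots,B_t$ via a proper colouring of $G^2[B_c]\setminus E(G)$, the same observation that each $G[B_i]$ is a matching plus isolated vertices with pairwise disjoint neighbourhoods (the paper's Claim~1 on ``tractable'' vertices), the same appeal to Theorem~\ref{forest} to pick a transversal forest of $\{N(v):v\in I_i\}$, and two fresh colours per class. The gap is exactly the one you name and then leave open: a vertex $x\notin B_c$ with $N(x)$ independent and $N(x)$ swallowed by a single $T_m$ may see only one new colour, and your hoped-for parity/distance count inside one tree of $G[T_m]$ does not obviously yield a contradiction (nor does it address $N(x)$ meeting several components of $G[T_m]$, whose $2$-colourings are independent of one another and can all be chosen so as to put $N(x)$ on one side). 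As written, the $2$-dynamic condition is not verified for these vertices, so the proof is incomplete.

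The paper closes this gap not by analysing the transversal after the fact but by rigging the graph to which Theorem~\ref{forest} is applied. Call $u\notin B_c$ potentially bad if $N(u)$ is independent and $N(u)\subseteq N_G(B_i)$ for some $i$. The paper's Claim~2 shows, using $d$-regularity and $d\ge 3$, that such a $u$ either (a) has two neighbours $x_u,x'_u$ each with at most $d-1$ neighbours inside $N_G(B_i)$, or (b) has a neighbour $y$ that is not tractable but is adjacent to a tractable vertex; in case (b) the vertex $y$ lies in $N_G(S_i)$, which is deleted before the transversal is chosen, so $y$ is never recoloured and $u$ keeps a neighbour with an old colour. In case (a) one adds the auxiliary edge $x_ux'_u$ to $G[N_G(B_i)]$, forming $H_i$; the degree hypothesis on $x_u,x'_u$ keeps $\Delta(H_i)\le d$, so Theorem~\ref{forest} still applies with parts $N(v)$ of size $d$. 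The transversal is then a forest of $H_i$, and the $2$-colouring one uses is a proper $2$-colouring of $H_i[T_i]$, not of $G[T_i]$: if ever $N(u)\subseteq T_i$, then $x_u,x'_u\in T_i$ are joined by an auxiliary edge and therefore receive different colours, so $u$ cannot become bad. Encoding the constraint ``$u$ must see two colours'' as an edge of the host graph before the transversal theorem is invoked, rather than trying to verify it afterwards, is the idea your proposal is missing.
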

 \begin{proof}{
One can easily show that statement is true for $d=1,2$. Let $d\geq 3$.
Assume that $c$ be a $k$-coloring of  $G$ with the set of bad vertices $B_c$. 
By the definition of $B_c$, for each $v\in B_c$, only one color is assigned 
 to all neighbors of $v$, $N_G(v)$. 
 
 \noindent Consider the induced subgraph of $G^2$ on $B_c$, $G^2[B_c]$.
   Delet from $G^2[B_c]$ all  edges of $G$ that lie in $B_c$. 
Suppose that $l=\chi(G^2[B_c]\setminus E(G))$. Consider an $l$-coloring of $G^2[B_c]\setminus E(G)$ which partitions $B_c$ into $B_1,\ldots, B_l$. 
For every $B_i$, if 
$v,w\in B_i$, then $N_G(v)\cap N_G(w)=\varnothing$. It follows that $N_G(B_i)$ is partitioned to $\{N_G(v)|\, v\in B_i\}$. 

 \noindent Our aim is to find a subtest $T$ whose  intersection with the neighbor set  of each bad vertex is nonempty and 
  for which $G[T]$ has a $2l$-coloring. 

\noindent We say a bad vertex $v\in B_i$ is tractable if $v\in B_i\cap N_G(B_i)$. Also, we say a vertex $u\in V(G)\setminus B_c$ is potentially bad, if 
its neighbor set is an independent set and there is an $i$, $1\leq i\leq l$, $N_G(u)\subseteq N_G(B_i)$.
\begin{clm}\label{c1}
Every  vertex $u\in N_G(B_i)$ is adjacent to  at most one tractable vertex $v\in B_i$. Moreover, every tractable vertex $w\in B_i$ is adjacent to exactly one    
tractable vertex $v\in B_i$.
\end{clm}
 Assume that $v,v'\in B_i$ are two tractable vertices, which 
are adjacent to $u$. But this is a contradiction  because $u\in N_G(v)\cap N_G(v')$ and 
hence $v$ and $v'$ are adjacent in $G^2[B_c]\setminus E(G).$ 

 \begin{clm}\label{c2}
Let  $u$ be a potentially bad vertex. One of the following conditions is satisfied.
\begin{description}
\item[ a)] The vertex $u$ has at least two neighbors, say $x, x'$, such that both the number of neighbors of $x$ and the number of neighbors of $x'$
in $N_G(B_i)$ are  at most $d-1$.   
\item[ b)] The vertex $u$ has a neighbor in $N_G(B_i)$, say $y$, such that the number of neighbors of $y$ in $N_G(B_i)$ is equal to $d$.
This vertex, $y$, is not a tractable vertex and has a tractable vertex $v$ in its neighbor.  
\end{description}
\end{clm}
Let $u$ be a potentially bad vertex. If $u\not\in N_G(B_i)$, then the first condition is obviously hold. Suppose that  
$u\in N_G(B_i)$.  If condition (a)  is not hold, then at most one vertex in $N_G(u)$ has less than $d$ neighbors in $N_G(B_i)$.
Since $d\geq3$, $u$ is adjacent  to at least two vertices which have  $d$ neighbors in $N_G(B_i)$. 
Claim \ref{c1} yields that at least one of them  is not tractable, say $y$.
Therefore,  $y\in N_G(u)$ and $y$ has $d$ neighbors in $N_G(B_i)$.
There exists a vertex $v$ which belongs to $B_i$ and is adjacent to $y$
 because $y\in N_G(u)\subseteq N_G(B_i)$. 
 Also, $v\in N_G(B_i)$ because all of neighbors of $y$ lie in $N_G(B_i)$. Thus, $v$ is a tractable vertex.

 \noindent We are now ready to show that there is a transversal forest $T_i$ on vertices of $G[N_G(B_i)]$ which has a $2$-coloring $f_i$
 such that every bad vertex $v\in B_i$ has one neighbor in $T_i$ and 
  for a  potentially bad vertex $u$, if $N_G(u)\subseteq T_i$, then $|f_i(N_G(u))|\geq 2$. 
  
\noindent For every potentially bad vertex $u$, if it has at least two neighbors  with at most $d-1$
neighbors in $N_G(B_i)$, fix two of them and call $x_u, x'_u$.
For every $B_i$, we obtain a graph $H_i$  by adding all edges $x_ux'_u$ to induced subgraph $G[N_G(B_i)]$. 
If condition (a) is not hold for $u$, then $u$ has a neighbor $y$ which is not a tractable vertex and has a tractable vertex $v$ in its neighbor.
 By Claim~\ref{c1}, $v$ has a unique tractable vertex such $w$. Hence, $y,w\in N_G(v)$.
 
\noindent Assume that $S_i$ is the set of all tractable vertices in $H_i$. In view of Claim~\ref{c1}, we conclude that  induced subgraph of $H_i$ on $S_i$  is a matching.
 Consider the graph $H_i\setminus N_G(S_i)$. Since  $N_G(v)\cap N_G(w)=\varnothing$ for any two vertex $v,w\in B_i$,
 we obtain $\{N_G(v): v\in B_i\setminus S_i\}$ is a partition of $V(H_i)\setminus  N_G(S_i)$.
Since $G$ is a  $d$-regular graph, for each $v\in B_i$, we have $|N_G(v)|= d$. Also, one can show that the maximum degree
of  $H_i$ is at most $d$. In view of Theorem~\ref{forest},  there is a transversal $S'_i$ in $H_i\setminus  N_G(S_i)$ such that $S'_i$ is a forest and 
for every $v\in B_i\setminus S_i$, we have $|N_G(v)\cap S'_i|=1$.  Therefore, if take $T_i= S_i\cup S'_i$, 
then $T_i\cap N_G(v)\neq\varnothing$ for each vertex $v\in B_i$.
As $S'_i\subseteq H_i\setminus  N_G(S_i)$, we have  $N_G(S_i)\cap S'_i=\varnothing$.
Also, because induced subgraphs on $S_i$ and $S'_i$ in $H_i$ are forest and  $S_i, S'_i$ are disjoint, 
 one can conclude that $T_i$ is forest. Furthermore, if $u$ is  potentially bad vertex and $N_G(u)\subseteq T_i$,
  then for every $2$-coloring $f_i$ of $H_i[T_i]$, $|f_i(N_G(u))|\geq 2$.  

\noindent Set $T=\cup_{i=1}^lT_i $. Define a dynamic coloring $f$ with $k+2l$ colors for $G$ as follows.
If $v\not\in T$, define $f(v)=c(v)$; otherwise, define $f(v)=f_i(v)$, where $v\in T_i$. 
}
 \end{proof}
Note that if one can color a $d$-regular graph $G$ with $\chi(G)$ colors 
such that any two bad vertices of $G$  in this coloring have no common neighbors (i.e., have a distance either 1 or at least 3 with each other),
 then Theorem~\ref{G2} impleis that Montgomery's conjectre is true.

In \cite{MR2702379} Montgomery showed that $\chi_2(G)-\chi(G)$
for a graph $G$ can be arbitrarily large. It seems that the small difference between  $\Delta(G)$ and $\delta(G)$ provides a small  difference between
the dynamic chromatic number and the chromatic number of $G$.
\begin{thm}
Let $G$ be a graph with maximum degree $\Delta $ and minimum  degree $\delta$. We have  
$$\chi_2(G)-\chi(G)\leq \lceil e{\Delta\over\delta}\log(2e(\Delta^2+1))\rceil.$$
  \end{thm}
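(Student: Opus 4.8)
The plan is to run the probabilistic argument of Theorem~\ref{Dynam1} with the regularity hypothesis replaced by the two-sided bound $\delta\le{\rm deg}(v)\le\Delta$; the extra factor $\Delta/\delta$ in the statement is exactly what is needed to make the three tail estimates fit under a single bound. We may assume $\delta\ge 1$. First I would peel off the regimes where the random construction is unavailable. If $\Delta\le 2$ then $G$ is a disjoint union of paths and cycles and the inequality is immediate. If $\chi(G)\le 3$, or if $\delta<2\log(2e(\Delta^2+1))$, I would invoke Theorem~\ref{mont}, giving $\chi_2(G)\le\Delta+3$, together with Proposition~\ref{pro}, giving $\chi_2(G)\le 2\chi(G)$ whenever $e(\delta\Delta-\delta+2)(1/2)^{\delta}\le 1$: when $\delta$ is large the latter hypothesis holds and then $2\chi(G)-\chi(G)=\chi(G)\le 3$ is below the stated bound, while when $\delta$ is small the factor $\Delta/\delta$ is large and $\Delta+3-\chi(G)\le\Delta+1\le e\frac{\Delta}{\delta}\log(2e(\Delta^2+1))$ follows from elementary estimates. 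This reduces everything to the principal case $\chi(G)\ge 4$ and $\delta\ge 2\log(2e(\Delta^2+1))$, in which $p:=\frac{\log(2e(\Delta^2+1))}{\delta}$ satisfies $0<p\le\tfrac12$ and $p\delta=\log(2e(\Delta^2+1))$.

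In the principal case I would fix, by the discussion following Theorem~\ref{mysm}, a $\chi(G)$-coloring $c$ whose set of bad vertices $B:=B_c$ is independent, so that $B\cap N(B)=\varnothing$, and then put each vertex of $N(B)$ into a random set $D\subseteq N(B)$ independently with probability $p$. The three families of bad events are as in Theorem~\ref{Dynam1}: for $v\in B$, $E_v=\{N(v)\cap D=\varnothing\text{ or }N(v)\subseteq D\}$; for $v\in N(B)$, $E_v=\{|N(v)\cap D|\ge e\Delta p\}$; for $v\notin B\cup N(B)$, $E_v=\{N(v)\subseteq D\}$. The crux is to bound every $\Pr(E_v)$ by $\frac1{e(\Delta^2+1)}$ with this single $p$. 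For $v\in B$, using ${\rm deg}(v)\ge\delta$ and $p\le\tfrac12$, $\Pr(E_v)=(1-p)^{{\rm deg}(v)}+p^{{\rm deg}(v)}\le 2(1-p)^{\delta}\le 2e^{-p\delta}=\frac1{e(\Delta^2+1)}$; the factor $2$ inside $\log(2e(\Delta^2+1))$ is precisely the room needed to absorb the $+\,p^{{\rm deg}(v)}$ term. For $v\notin B\cup N(B)$, $\Pr(E_v)$ is $0$ unless $N(v)\subseteq N(B)$, in which case $\Pr(E_v)=p^{{\rm deg}(v)}\le p^{\delta}\le(1-p)^{\delta}\le e^{-p\delta}\le\frac1{e(\Delta^2+1)}$. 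For $v\in N(B)$, $|N(v)\cap D|$ is a sum of at most $\Delta$ independent Bernoulli$(p)$ variables, hence is stochastically dominated by a ${\rm Bin}(\Delta,p)$ variable (this domination is what handles vertices with few neighbors in $N(B)$), and the Chernoff inequality with $\mu=\Delta p$ and $1+\epsilon=e$ gives $\Pr(E_v)\le(e^{\epsilon}/(1+\epsilon)^{1+\epsilon})^{\Delta p}=e^{-\Delta p}\le e^{-\delta p}=\frac1{2e(\Delta^2+1)}\le\frac1{e(\Delta^2+1)}$.

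Each $E_v$ is determined by the membership indicators of the vertices in $N(v)\cap N(B)$, so $E_v$ is mutually independent of all $E_w$ with $w$ at distance more than $2$ from $v$; there are at most $\Delta^2$ vertices within distance $2$ of $v$ and $e\cdot\frac1{e(\Delta^2+1)}\cdot(\Delta^2+1)\le 1$, so the Lov\'asz Local Lemma yields a set $D$ avoiding every $E_v$. For such a $D$: every bad vertex has a neighbor in $D$ and a neighbor outside $D$; every vertex of $N(B)$ has a neighbor outside $D$ (a bad vertex, which lies outside $N(B)\supseteq D$); every vertex outside $B\cup N(B)$ has a neighbor outside $D$; and $\Delta(G[D])<e\Delta p$, so $G[D]$ can be properly colored with $\lceil e\Delta p\rceil=\lceil e\frac{\Delta}{\delta}\log(2e(\Delta^2+1))\rceil$ colors, chosen disjoint from those of $c$. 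I would then set $f(v)=c(v)$ for $v\notin D$ and let $f(v)$ be the new color of $v$ for $v\in D$. Since the new and old color sets are disjoint, $f$ is proper, and it is $2$-dynamic by the same verification as in Theorem~\ref{Dynam1}: a bad vertex now sees an old and a new color, and every other vertex either sees a new color together with the old color of a neighbor outside $D$, or has all its neighbors outside $D$ and so sees exactly the colors it saw under $c$, which were at least two. This gives $\chi_2(G)\le\chi(G)+\lceil e\frac{\Delta}{\delta}\log(2e(\Delta^2+1))\rceil$.

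The hard part is arranging that a single sampling probability $p$ controls all three event types at once and still lands on the bound in the statement: this is what forces the choice $p=\frac{\log(2e(\Delta^2+1))}{\delta}$, so that $e\Delta p=e\frac{\Delta}{\delta}\log(2e(\Delta^2+1))$, and the appearance of $2e$ rather than $e$ inside the logarithm (the factor $2$ is consumed by the bad-vertex estimate $(1-p)^{{\rm deg}(v)}+p^{{\rm deg}(v)}\le 2(1-p)^{\delta}$). The only other difficulty is bookkeeping through the boundary regimes — $\chi(G)\le 3$, $\delta$ too small for $p\le\tfrac12$, and $\Delta\le 2$ — where the random construction either breaks down or, lacking a $\chi(G)$-coloring with independent bad set, is not even applicable, and one falls back on Theorem~\ref{mont} and Proposition~\ref{pro} and checks a few elementary numerical inequalities.
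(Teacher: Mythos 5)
Your proposal is correct and follows essentially the same route as the paper's proof: the same sampling probability $p=\log(2e(\Delta^2+1))/\delta$ on $N(B)$, the same three families of bad events bounded by $1/(e(\Delta^2+1))$ and combined via the Lov\'asz Local Lemma, and the same fallbacks (Proposition~\ref{pro} when $\chi(G)\le 3$, the $\Delta+1$ bound when $\delta/2\le\log(2e(\Delta^2+1))$). Your treatment is in fact slightly more careful than the paper's at two points — the explicit stochastic domination by a binomial in the Chernoff step and the verification of the hypothesis of Proposition~\ref{pro} — but these are refinements of the same argument, not a different one.
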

\begin{proof}{
 The validity of the theorem is readily verified when
$\Delta=1,2$. Thus, we may assume $\Delta\geq 3$.
First suppose that $\log 2e(\Delta^2+1)\leq \delta/2$ and $\chi(G)\geq 4$.
 Similar to the proof of Theorem~\ref{Dynam1}, assume that  there is a coloring, say $c$, with $\chi(G)$ colors 
such that the set of bad vertices is independent.  
Choose each vertex of $N(B)$(neighborhoods of $B$) with probability $p={\log 2e(\Delta^2+1)/ \delta}$ and put a set, say $D$.
Same as proof of Theorem~\ref{Dynam1}, we define three types of bad events as follows.
\begin{itemize} 
\item For every $v\in B$, $E_v$ is event that $N(v)\cap D=\varnothing$ or $N(v)\subseteq D$.
\item For every $v\in N(B)$, $E_v$ is event that $ |N(v)\cap D|\geq e{\rm deg(v)} p$.
\item For every $v\in V(G)\setminus(B\cup N(B))$, $E_v$ is event that $N(v)\subseteq D$.
\end{itemize} 
It is easy to see that for every  $v\in B$, $\Pr (E_v)\leq(1-p)^{\delta}+p^{\delta}$.  
Since $p\leq{1\over 2}$,
we have $(1-p)^{\delta}+p^{\delta}\leq 2(1-p)^{\delta}$ and so $$\Pr(E_v)\leq 2\exp(-p\delta)=\frac{1}{e(\Delta^2+1)}.$$

Let $w\in V(G)\setminus B$. By the same discussion of  proof of Theorem~\ref{Dynam1}, we can obtain   $\Pr(E_w)\leq {1/e(\Delta^2+1)}$.
Since every bad event $E_v$ is mutually independent  of all the events, but those which have distance at most 2 from $v$. Hence,  
$E_v$ is not mutually independent  of at most $\Delta^2$ bad events.
Therefore, by the Lov\'asz  local lemma,
 it follows that $$\Pr({\displaystyle\bigcap_{v\in V(G)}}E^c_v)>0.$$
  
 Now, one can color $G[D]$ with $\lceil e(\Delta/\delta)\log(2e(\Delta^2+1))\rceil$ new colors and extend this new coloring  and former coloring $c$ on vertices 
 $V(G)\setminus D$ to a proper dynamic coloring for $G$ with $\chi(G)+ \lceil e(\Delta/\delta)\log(2e(\Delta^2+1))\rceil$ colors, as desired.

Assume that  $\log 2e(\Delta^2+1)\leq \delta/2$ and $\chi(G)\leq 3$.  In this case, the assertion follows from Proposition~\ref{pro}.

If $\delta/2\leq {\log(2e(\Delta^2+1))}$, we have $e\Delta/2\leq e(\Delta/\delta)\log(2e(\Delta^2+1)).$
Also, since $\Delta\geq 3$, we obtain $\Delta+1\leq e\Delta/2$. Hence, $\chi_2(G)-\chi(G)\leq e(\Delta/\delta)\log(2e(\Delta^2+1)).$
}
\end{proof}
Here we establish a bound on the $r$-dynamic chromatic number of a graph $G$ in terms of $\chi(G)$, $\Delta(G)$, $\delta(G)$, and $r$, when
$r$ is at most ${\delta/\log(2er(\Delta^2+1))}.$

 \begin{thm}
Let $G$ be a graph with maximum degree $\Delta $ and minimum  degree $\delta$ and let $r$ be a positive integer with $2\leq r\leq {\delta/\log(2er(\Delta^2+1))}$.
Then the $r$-dynamic chromatic number of $G$, $\chi_r(G)$, is at most
  $$ \chi(G)+(r-1)\lceil e{\Delta\over\delta}\log(2er(\Delta^2+1))\rceil.$$ 
\end{thm}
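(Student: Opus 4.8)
The plan is to mimic the proof of the previous theorem (the $r=2$ case), replacing the binary "in $D$ / out of $D$" dichotomy with an $r$-fold partition. First, we may assume $\Delta\geq 3$; the small cases are routine. Since $r\leq \delta/\log(2er(\Delta^2+1))$, we have $\log(2er(\Delta^2+1))\leq \delta/r\leq \delta$, so the probabilistic regime is the favorable one and there is no need for a separate "$\delta$ small" branch as in the previous theorem. If $\chi(G)\leq 3$ the conclusion will follow directly from Proposition~\ref{pro} after checking the hypothesis $e((\delta\Delta-\delta+1)(r-1)+1)(1-1/r)^{\delta}\leq 1$, which holds precisely because $\delta\geq r\log(2er(\Delta^2+1))$ forces $(1-1/r)^{\delta}\leq e^{-\delta/r}\leq 1/(2er(\Delta^2+1))$, and $(\delta\Delta-\delta+1)(r-1)+1\leq \Delta^2+1$ suffices. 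So the substantive case is $\chi(G)\geq 4$.

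In that case, use the discussion after Theorem~\ref{mysm}: fix a $\chi(G)$-coloring $c$ of $G$ whose bad set $B=B_c$ is independent. Now independently assign to each vertex of $N(B)$ a uniformly random "layer" in $\{1,\dots,r-1\}$ (or, more conveniently, choose a random function $N(B)\to\{1,\dots,r-1\}$), inducing a partition of $N(B)$ into classes $D_1,\dots,D_{r-1}$. Define bad events: for $v\in B$, $E_v$ is the event that $N(v)$ misses some layer, i.e.\ $N(v)\cap D_j=\varnothing$ for some $j$ — this is the event that the neighbors of $v$ fail to realize all $r-1$ extra colors; for $v\in N(B)$, $E_v$ is the event that $|N(v)\cap D_j|\geq e\,\mathrm{deg}(v)\,p$ for some $j$, where $p=1/(r-1)$ roughly plays the role of the sampling probability (one analyzes each layer with a Chernoff bound); for $v\notin B\cup N(B)$, $E_v$ is again the event that $N(v)\subseteq D_j$ for some $j$ (or an analogous "all neighbors concentrated" event) — these force a proper extension. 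A union bound over the $r-1$ layers, combined with $\Pr(N(v)\cap D_j=\varnothing)=(1-1/(r-1))^{\mathrm{deg}(v)}\leq e^{-\delta/(r-1)}$ and the normalization $\delta\geq r\log(2er(\Delta^2+1))$, should give each $\Pr(E_v)\leq 1/(e(\Delta^2+1))$ with room to spare; the Chernoff estimate with $\epsilon=e-1$ handles the $N(B)$ events exactly as in Theorem~\ref{Dynam1}. Each $E_v$ depends only on events at graph-distance $\leq 2$ from $v$, i.e.\ on at most $\Delta^2$ others, so $e\cdot\frac{1}{e(\Delta^2+1)}\cdot(\Delta^2+1)\leq 1$ and the Lov\'asz Local Lemma yields an outcome avoiding all $E_v$.

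Fix such a partition $D_1,\dots,D_{r-1}$. The good events say: every bad vertex has a neighbor in each $D_j$; every vertex of $N(B)$, and every vertex outside $B\cup N(B)$, has a neighbor avoiding at least one $D_j$ (indeed avoiding "most" of the $D_j$'s) so that proper recoloring is unobstructed; and each induced subgraph $G[D_j]$ has maximum degree at most $e\Delta p = e\Delta/(r-1)$, hence $\chi(G[D_j])\leq \lceil e\frac{\Delta}{\delta}\log(2er(\Delta^2+1))\rceil$ — here one substitutes the value of $p$ back in terms of $\delta$ via the normalization, exactly as the $\Delta/\delta$ factor arises in the previous theorem. Recolor each $G[D_j]$ with its own block of $\lceil e\frac{\Delta}{\delta}\log(2er(\Delta^2+1))\rceil$ fresh colors, disjoint across the $r-1$ layers, keeping $c$ on $V(G)\setminus\bigcup_j D_j$. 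This is a proper coloring using $\chi(G)+(r-1)\lceil e\frac{\Delta}{\delta}\log(2er(\Delta^2+1))\rceil$ colors, and every vertex $v$ now sees at least $\min\{r,\mathrm{deg}(v)\}$ colors in its neighborhood: a bad vertex of $B$ sees its one old color plus one fresh color from each of the $r-1$ layers, while every other vertex already saw at least two colors (for $v\notin B$ with $\mathrm{deg}(v)\geq 2$) and the recoloring only adds distinctions — a short separate argument, as in the $r=2$ proofs, confirms $\geq r$ colors appear. The main obstacle I anticipate is the bookkeeping for vertices of degree between $r$ and the threshold and for the "$v\notin B\cup N(B)$" events: one must choose the bad events so that the LLL conclusion genuinely guarantees $\min\{r,\mathrm{deg}(v)\}$ distinct colors in $N(v)$ for \emph{every} $v$, not merely for the bad vertices, and verify that the fresh colors do not collide with $c$ in a way that creates a new monochromatic-neighborhood vertex; getting the event definitions exactly right (and checking the probability bounds survive the extra union-over-layers factor $r-1$, which is absorbed by the $2r$ inside the logarithm) is the delicate part.
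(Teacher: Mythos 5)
There is a genuine gap, and it sits exactly where you flagged it. Your random experiment only touches $N(B)$, where $B$ is the set of vertices whose neighborhood is monochromatic under $c$. That machinery is tailored to $r=2$: for $r\geq 3$ the problematic vertices are not just those seeing one color but all those seeing fewer than $r$ colors, and under an arbitrary $\chi(G)$-coloring that can be every vertex of the graph. A vertex $u$ with $N(u)\cap N(B)=\varnothing$ is completely unaffected by your recoloring and may still see only $2$ colors in its neighborhood, so the construction does not produce an $r$-dynamic coloring. Your closing remark that ``every other vertex already saw at least two colors and the recoloring only adds distinctions'' delivers $2$, not $\min\{r,\deg(v)\}$. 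There is no way to patch this while keeping the experiment confined to $N(B)$; the $B_c$-based setup has to be abandoned. A second, independent problem is quantitative: with a uniform assignment to $r-1$ layers, each layer receives each neighbor with probability $1/(r-1)$, so Chernoff gives $\Delta(G[D_j])\leq e\Delta/(r-1)$, which can be much larger than the claimed $e(\Delta/\delta)\log(2er(\Delta^2+1))$ when $\delta$ greatly exceeds $r\log(2er(\Delta^2+1))$; ``substituting $p$ back via the normalization'' does not repair this because $1/(r-1)$ is genuinely bigger than $\log(2er(\Delta^2+1))/\delta$.

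The paper's proof fixes both issues at once by dropping $B_c$ entirely and randomly partitioning \emph{all} of $V(G)$ into $r$ classes with a skewed distribution: each vertex lands in $D_i$ with probability $p=\log(2er(\Delta^2+1))/\delta$ for $i\leq r-1$ and in $D_r$ with the remaining probability $1-(r-1)p$. The bad events are that some class is missed by $N(v)$, or that some small class $D_i$ ($i\leq r-1$) contains at least $e\deg(v)p$ neighbors of $v$; the Local Lemma (with dependency degree about $2\Delta^2+1$ and each event of probability at most $1/(2e(\Delta^2+1))$) yields a partition in which every neighborhood meets every class and each $G[D_i]$, $i\leq r-1$, has maximum degree at most $e\Delta p$. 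Then each small class gets its own fresh palette of $\lceil e(\Delta/\delta)\log(2er(\Delta^2+1))\rceil$ colors by greedy coloring, while $D_r$ is colored with the $\chi(G)$ colors of a proper coloring of $G$. Because the $r$ palettes are pairwise disjoint and every vertex has a neighbor in every class, every vertex sees at least $r$ colors, with no case analysis over ``bad'' or ``potentially bad'' vertices at all. If you want to salvage your outline, this global $r$-way skewed partition is the idea you are missing.
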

\begin{proof}{
 Take $p={\log(2er(\Delta^2+1))/\delta}$. Let $Y$ be a random variable which takes  values $1,\ldots,r-1$ with probability $p$, and the value $r$ with probability 
$1-(r-1)p$. For every vertex $v$, choose randomly one element of $\{1,2,\ldots,r\}$ with probability distribution of $Y$. If $Y(v)=i$, put $v$ in $D_i$.    
Note that $V(G)$ is partitioned to $D_1, D_2, \ldots, D_r$.\\ 
For every vertex $v$, define  two bad events $A_v$ and $B_v$. Let  $A_v$ be the event that there exists a $D_i$ 
such that $N(v)\subseteq V(G)\setminus D_i$ and let $B_v$ be the event that there exists an $i\leq r-1$, 
$|N(v)\cap D_i|\geq e{\rm deg}(v)p$.
It is straight that  
$$\Pr(A_v)\leq ((r-1)p)^{{\rm deg}(v)}+(r-1)(1-p)^{{\rm deg}(v)}.$$
Since $p\leq 1/r$, one can see that 

$$
\begin{array}{ccc}
  ((r-1)p)^{{\rm deg}(v)}+(r-1)(1-p)^{{\rm deg}(v)}& \leq  &  r(1-p)^{{\rm deg}(v)} \\
         & &  \\
~&\leq   &   r(1-p)^{\delta}\\
       & &  \\
  ~&\leq   &   {1\over 2e(\Delta^2+1)}.
  \end{array}
$$
For $i\leq r-1$, suppose that $B_{v,i}$ is event that  $|N(v)\cap D_i|\geq e{\rm deg}(v)p$. Hence, $\Pr(B_v)\leq \Sigma_{i=1}^{r-1}\Pr(B_{v,i})$.
For each $i$, $1\leq i\leq r-1$, let $X_{v,i}$ be a random variable which  counts the number of neighbors of $v$ in $D_i$. Therefore, $\Pr(B_{v,i})=\Pr(X_{v,i}\geq e{\rm deg}(v)p)$. 
It is easy to see that   ${\rm E}(X_{v,i})= {\rm deg}(v)p$.
In view of the chernoff inequality,  we have 
$$\Pr(X_{v,i}\geq (1+\epsilon){\rm deg}(v)p)\leq { ({e^\epsilon/{(1+\epsilon)^{1+\epsilon}}})^{{\rm deg}(v)p}}$$
 and then 
$$Pr(B_v)\leq (r-1) { ({e^\epsilon/{(1+\epsilon)^{1+\epsilon}}})^{{\rm deg}(v)p}}.$$
If choose $\epsilon=e-1$, then   
$$
\begin{array}{ccc}
 \Pr(B_v)& \leq  & (r-1) \exp(-{\rm deg}(v)p) \\
         & &  \\
~&\leq   &   (r-1)\exp(-\delta p)\\
       & &  \\
  ~&\leq   &   {r-1\over 2re(\Delta^2+1)}\\
         & &  \\
  ~&\leq   &   {1\over 2e(\Delta^2+1)}.

  \end{array}
$$
It is clear that each  event $A_v$($B_v$) is mutually independent of all the events $A_w$ or $B_w$, but those which have distance at most 2 from $v$. 
 As there are at most $2\Delta^2+1$ such events,   the Lov\'asz  local lemma yields that  
 $$\Pr({\displaystyle\bigcap_{v\in V(G)}}(A^c_v\cap B^c_v))>0.$$
 Therefore, there exists  a partition $D_1, D_2, \ldots, D_r$ such that for each vertex $v\in V(G)$ and each $i$, $1\leq i\leq r$, $N(v)\cap D_i\neq\varnothing$.
Moreover, for  $1\leq i\leq r-1$, the maximum degree of $G[D_i]$ is at most 
$$\lfloor e{\Delta\over\delta}\log(2er(\Delta^2+1))\rfloor.$$
 Hence, by greedy  coloring algorithm,  
One can color each induced subgraphs $G[D_i]$ with $\lceil e{\Delta\over\delta}\log(2er(\Delta^2+1))\rceil$  colors. 
In addition, since $\chi(G[D_r])\leq \chi(G)$, we can color the vertices of $G[D_r]$ with at most $\chi(G)$ colors.
Hence,  
$$\chi_r(G)\leq \chi(G)+(r-1)\lceil e{\Delta\over\delta}\log(2er(\Delta^2+1))\rceil.$$ 
  } 
 \end{proof}
\begin{cor}
 Let $G$ be a $d$-regular graph and $r$ be a positive integer with $2\leq r\leq {d/\log (2er(d^2+1))}$.
Then the $r$-dynamic chromatic number of $G$, $\chi_r(G)$, is at most
$$\chi(G)+(r-1)\lceil e\log 2er(d^2+1)\rceil.$$
\end{cor}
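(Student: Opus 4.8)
The plan is to obtain this statement as an immediate specialization of the preceding theorem, so no new argument is needed. A $d$-regular graph $G$ has maximum degree $\Delta = d$ and minimum degree $\delta = d$; in particular the ratio $\Delta/\delta$ equals $1$ and $\Delta^2+1 = d^2+1$. Substituting these values into the hypothesis $2 \le r \le \delta/\log(2er(\Delta^2+1))$ of the theorem yields exactly $2 \le r \le d/\log(2er(d^2+1))$, which is precisely the hypothesis assumed in the corollary. Hence the theorem is applicable to $G$.

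Carrying out the substitution in the conclusion, the theorem gives
$$\chi_r(G) \le \chi(G) + (r-1)\left\lceil e\cdot\frac{\Delta}{\delta}\cdot\log\bigl(2er(\Delta^2+1)\bigr)\right\rceil = \chi(G) + (r-1)\left\lceil e\log\bigl(2er(d^2+1)\bigr)\right\rceil,$$
which is the claimed bound. The only points worth checking are bookkeeping ones: that the factor $\Delta/\delta$ collapses to $1$ for a regular graph and that the upper bound imposed on $r$ translates verbatim from $\delta$ and $\Delta$ to $d$; both are immediate. Consequently there is no real obstacle in this proof — it is a one-line consequence of the previous theorem — and one could of course also reprove it from scratch by rerunning the random-partition and Lovász Local Lemma argument with every degree equal to $d$, but that would merely duplicate the work already done.
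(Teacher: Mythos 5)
Your proposal is correct and matches the paper's intent: the corollary is stated immediately after the theorem precisely as the specialization $\Delta=\delta=d$, under which the hypothesis and the bound translate verbatim as you describe. No further argument is needed.
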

\noindent {\bf Acknowledgement:} I would like to express my deep gratitude to Professor Carsten Thomassen for his support during my visit in
 Technical University of Denmark. My visit was supported by the ERC Advanced Grant GRACOL. I would also like to thank Professor Hossein Hajiabolhassan for his useful comments.

\end{document}